\DeclareMathOperator{\Max}{Max}
\DeclareMathOperator{\Min}{Min}
\newtheorem{theorem}{Theorem}%[section]
\newtheorem{definition}[theorem]{Definition}
\newtheorem{example}[theorem]{Example}
\newtheorem{corollary}[theorem]{Corollary}
\title{Residuation in finite posets}
\author{Ivan~Chajda and Helmut~L\"anger}
\date{}
\begin{document}
\footnotetext[1]{Support of the research by \"OAD, project CZ~02/2019, and support of the research of the first author by IGA, project P\v rF~2019~015, is gratefully acknowledged.}
\maketitle
\begin{abstract}
When an algebraic logic based on a poset instead of a lattice is investigated then there is a natural problem how to introduce the connective implication to be everywhere defined and satisfying (left) adjointness with the connective conjunction. We have already studied this problem for the logic of quantum mechanics which is based on an orthomodular poset or the logic of quantum effects based on a so-called effect algebra which is only partial and need not be lattice-ordered. For this, we introduced the so-called operator residuation where the values of implication and conjunction need not be elements of the underlying poset, but only certain subsets of it. However, this approach can be generalized for posets satisfying more general conditions. If these posets are even finite, we can focus on maximal or minimal elements of the corresponding subsets and the formulas for the mentioned operators can be essentially simplified. This is shown in the present paper where all theorems are explained by corresponding examples.
\end{abstract}

{\bf AMS Subject Classification:} 06A11, 06C15, 06D15, 03G25

{\bf Keywords:} Finite poset, strongly modular poset, bounded poset, complementation, Boolean poset, residuated poset, adjointness, relatively pseudocomplemented poset

If an algebraic logic is based on a lattice $(L,\leq)$, we usually ask that the connective conjunction $\odot$ and the connective implication $\rightarrow$ are related by means of an adjointness, i.e.\ for all $x,y,z\in L$ we have
\[
x\odot y\leq z\text{ if and only if }x\leq y\rightarrow z.
\]
It is well-known (see e.g.\ \cite B) that for Boolean algebras one can take $x\odot y=x\wedge y$ and $x\rightarrow y=x'\vee y$ in order to obtain a residuated lattice. For modular lattices with complementation one can consider $x\odot y=y\wedge(x\vee y')$ and $x\rightarrow y=x'\vee(x\wedge y)$ in order to obtain a left-residuated lattice as shown in \cite{CF} and \cite{CL19a}. Similar results can be proved for orthomodular lattices as pointed in \cite{CL17a} and \cite{CL17b}. However, when working with posets instead of lattices, we cannot use the lattice operations and there is a problem how to introduce the operations $\odot$ and $\rightarrow$. Sometimes we define these operations by using the operators $L$ and $U$, i.e.\ the lower and upper cone, respectively, see e.g.\ \cite{CL18a}, \cite{CL19b} and \cite{CLP} for details. We call this kind of residuation an operator residuation. The disadvantage of this approach is that then the formulas for adjointness are rather huge and the corresponding sets are too large, also for finite posets. The idea of this paper is to replace these large sets for finite posets by relatively small ones which work as well.

Several papers on so-called operator residuation in posets with a unary operation were already published by the authors, see e.g.\ \cite{CL18a} -- \cite{CLP}. We investigated when operators $M$ and $R$ can be introduced such that the poset satisfies the so-called operator adjointness, i.e.\
\[
M(x,y)\subseteq L(z)\text{ if and only if }L(x)\subseteq R(y,z)
\]
where $L(a)$ denotes the lower cone of the element $a$. We were successful in several cases, for example in the case of Boolean posets and in the case of some modifications of modular or orthomodular posets. However, for the operators $M$ and $R$ we often obtained complicated formulas involving the operators $U$ and $L$ denoting the upper and lower cone, respectively. Since $R$ models the logical connective implication within the corresponding logic and a certain implication may have several values, the values of $R$ are in general not elements, but sets which may be large in concrete cases. In such a case we call the using of such an implication as unsharp reasoning, see e.g.\ \cite{CL19a} and \cite{CL19b}. On the other hand, in practice we often work with finite posets. Hence the natural question arises if in the case of finite posets the expression for $R$ can be simplified and the number of possible values of $R$ can be reduced. The present paper shows that this is indeed possible for certain finite posets, for example for finite bounded strongly modular posets, finite Boolean posets and finite relatively pseudocomplemented posets. Some examples illustrate the obtained results.

In the following, all considered structures are assumed to have a non-empty base set.

Let $(P,\leq)$ be a poset, $a\in P$ and $A,B\subseteq P$. Then we write $A\leq B$ in case $x\leq y$ for all $x\in A$ and $y\in B$. Instead of $\{a\}\leq B$ we simply write $a\leq B$. Analogously we proceed in similar cases. Further we define
\begin{align*}
L(A) & :=\{x\in P\mid x\leq A\}, \\
U(A) & :=\{x\in P\mid A\leq x\}
\end{align*}
Instead of $L(\{a,b\})$, $L(\{a\}\cup A)$, $L(A\cup B)$ and $L(U(A))$ we simply write $L(a,b)$, $L(a,A)$, $L(A,B)$ and $LU(A)$, respectively. Analogously we proceed in similar cases. Further, denote by $\Max A$ and $\Min A$ the set of all maximal and minimal elements of $A$, respectively.

For the following concepts, see e.g.\ \cite{CR}.

\begin{definition}
Let $\mathbf P=(P,\leq)$ be a poset. Then $\mathbf P$ is called {\em distributive} if it satisfies one of the following equivalent {\rm LU}-identities:
\begin{align*}
U(L(x,y),z) & \approx UL(U(x,z),U(y,z)), \\
L(U(x,y),z) & \approx LU(L(x,z),L(y,z)).
\end{align*}
The poset $\mathbf P$ is called {\em modular} {\rm(}cf.\ {\rm\cite{CR})} if for all $x,y,z\in P$ the following holds:
\[
x\leq z\text{ implies }L(U(x,y),z)=LU(x,L(y,z)).
\]
It is well-known that for lattices the modular law
\[
x\leq z\text{ implies }(x\vee y)\wedge z=x\vee(y\wedge z)
\]
can be equivalently replaced by the so-called {\em modular identity}
\[
(x\vee y)\wedge(x\vee z)\approx x\vee(y\wedge(x\vee z)).
\]
We call $\mathbf P$ {\em strongly modular} {\rm(}cf.\ {\rm\cite{CL19a})} if it satisfies the {\rm LU}-identities
\begin{align*}
L(U(x,y),U(x,z)) & \approx LU(x,L(y,U(x,z))), \\
L(U(L(x,z),y),z) & \approx LU(L(x,z),L(y,z)).
\end{align*}
\end{definition}

Every distributive poset is modular and strongly modular. A lattice is strongly modular if and only if it is modular. Concerning the {\rm LU}-conditions occurring in the definition of distributive or modular posets we note that the one-sided inclusions hold in general. Namely, because of
\[
U(x,z)\cup U(y,z)\subseteq U(L(x,y),z)
\]
we have
\[
UL(U(x,z),U(y,z))\subseteq U(L(x,y),z).
\]
Analogously,
\[
LU(L(x,z),L(y,z))\subseteq L(U(x,y),z)
\]
follows. Moreover, if $x\leq z$ then
\[
\{x\}\cup L(y,z)\subseteq L(U(x,y),z)
\]
and hence
\[
LU(x,L(y,z))\subseteq L(U(x,y),z).
\]
Let $\mathbf P=(P,\leq)$ be a poset. A unary operation $'$ on $P$ is called an {\em involution} if it satisfies the identity $x''\approx x$ and it is called {\em antitone} if $x\leq y$ implies $y'\leq x'$. If $\mathbf P$ has a least or a greatest element we will denote it by $0$ or $1$, respectively. If $\mathbf P$ has both $0$ and $1$ then it is called {\em bounded}.

In the following, instead of singletons $\{a\}$ we simply write $a$.

\begin{definition}
Let $\mathbf P=(P,\leq,{}',0,1)$ be a bounded poset with a unary operation. Then $\mathbf P$ is called {\em complemented} if it satisfies the {\rm LU}-identities $L(x,x')\approx0$ and $U(x,x')\approx1$. Moreover, $\mathbf P$ is called a {\em Boolean poset} if it is both distributive and complemented.
\end{definition}

\begin{example}\label{ex1}
The poset shown in Fig.~1

\vspace*{3mm}

\[
\setlength{\unitlength}{7mm}
\begin{picture}(6,8)
\put(3,0){\circle*{.3}}
\put(0,2){\circle*{.3}}
\put(2,2){\circle*{.3}}
\put(4,2){\circle*{.3}}
\put(6,2){\circle*{.3}}
\put(0,4){\circle*{.3}}
\put(6,4){\circle*{.3}}
\put(0,6){\circle*{.3}}
\put(2,6){\circle*{.3}}
\put(4,6){\circle*{.3}}
\put(6,6){\circle*{.3}}
\put(3,8){\circle*{.3}}
\put(3,0){\line(-3,2)3}
\put(3,0){\line(-1,2)1}
\put(3,0){\line(1,2)1}
\put(3,0){\line(3,2)3}
\put(3,8){\line(-3,-2)3}
\put(3,8){\line(-1,-2)1}
\put(3,8){\line(1,-2)1}
\put(3,8){\line(3,-2)3}
\put(0,2){\line(0,1)4}
\put(6,2){\line(0,1)4}
\put(0,4){\line(1,1)2}
\put(0,2){\line(1,1)4}
\put(2,2){\line(1,1)4}
\put(4,2){\line(1,1)2}
\put(2,2){\line(-1,1)2}
\put(4,2){\line(-1,1)4}
\put(6,2){\line(-1,1)4}
\put(6,4){\line(-1,1)2}
\put(2.85,-.75){$0$}
\put(-.6,1.9){$a$}
\put(1.2,1.9){$b$}
\put(4.45,1.9){$c$}
\put(6.4,1.9){$d$}
\put(-.6,3.9){$e$}
\put(6.4,3.9){$e'$}
\put(-.7,5.9){$d'$}
\put(1.2,5.9){$c'$}
\put(4.45,5.9){$b'$}
\put(6.4,5.9){$a'$}
\put(2.85,8.4){$1$}
\put(2.3,-1.8){{\rm Fig.~1}}
\end{picture}
\]

\vspace*{11mm}

is Boolean. Since every Boolean poset can be embedded into a Boolean algebra {\rm(}e.g.\ by means of the Dedekind-MacNeille completion{\rm)}, the complementation in Boolean posets is unique and it is even an antitone involution. For modular posets this need not be the case.
\end{example}

Now we introduce the basic notion of our study.

\begin{definition}
A {\em residuated poset} is an ordered quintuple $\mathbf P=(P,\leq,\odot,\rightarrow,1)$ where $(P,\leq,1)$ is a poset with $1$ and $\odot$ and $\rightarrow$ are mappings form $P^2$ to $2^P$ such that for all $x,y,z\in P$
\begin{enumerate}[{\rm(i)}]
\item $x\odot1\approx1\odot x\approx x$,
\item $x\odot y\approx y\odot x$,
\item $x\odot y\leq z$ if and only if $x\leq y\rightarrow z$.
\end{enumerate}
If only {\rm(i)} and {\rm(iii)} are satisfied then $\mathbf P$ is called a left-residuated poset. A {\em{\rm(}left{\rm)} residuated poset} is called {\em idempotent} if it satisfies the identity $x\odot x\approx x$. Condition {\rm(iii)} is called {\em{\rm(}left{\rm)} adjointness}.
\end{definition}

In the following we describe residuation in finite bounded posets for which neither distributivity nor modularity are assumed, and $'$ may be an arbitrary unary operation satisfying the identity $1'\approx0$ and two further very general {\rm LU}-identities.

\begin{theorem}\label{th1}
Let $(P,\leq,{}',0,1)$ be a finite bounded poset with a unary operation satisfying the {\rm(LU-)}identities
\begin{align*}
                1' & \approx0, \\
 L(U(L(x,y),y'),y) & \approx L(x,y), \\
U(L(U(x,y'),y),y') & \approx U(x,y')
\end{align*}
and define
\begin{align*}
      x\odot y & :=\Max L(U(x,y'),y), \\
x\rightarrow y & :=\Min U(L(x,y),x')
\end{align*}
for all $x,y\in B$. Then $(P,\leq,\odot,\rightarrow,1)$ is an idempotent left-residuated poset satisfying the identities
\[
x\odot0\approx0,\;0\rightarrow x\approx0',\;x\rightarrow0\approx x',\;x\rightarrow x'\approx x',\;1\rightarrow x\approx x.
\]
\end{theorem}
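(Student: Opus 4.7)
I would split the argument into three blocks. The seven explicit identities ($x\odot 1\approx x$, $1\odot x\approx x$, $x\odot 0\approx 0$, $0\rightarrow x\approx 0'$, $x\rightarrow 0\approx x'$, $x\rightarrow x'\approx x'$ and $1\rightarrow x\approx x$) all fall to direct computation using only that $0$ is least, $1$ is greatest, the hypothesis $1'\approx 0$, and the elementary facts $U(1)=\{1\}$, $L(0)=\{0\}$, $U(a,0)=U(a)$, $L(a,1)=L(a)$, $UL(a)=U(a)$, $LU(a)=L(a)$, $\Max L(a)=\{a\}$, $\Min U(a)=\{a\}$; none of these computations needs the two substantive LU-identities in the hypothesis, so I would dispatch them first. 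Idempotence is almost as cheap: every element of $U(x,x')$ dominates $x$, so $x\in L(U(x,x'),x)\subseteq L(x)$, which forces $x\odot x=\Max L(U(x,x'),x)=\{x\}$.

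The substantial content is the left-adjointness law. Finiteness of $P$ turns the set inequality $\Max L(U(x,y'),y)\leq z$ into the plain inclusion $L(U(x,y'),y)\subseteq L(z)$, and likewise $x\leq\Min U(L(y,z),y')$ into $U(L(y,z),y')\subseteq U(x)$, so the target becomes
\[
L(U(x,y'),y)\subseteq L(z)\ \Longleftrightarrow\ U(L(y,z),y')\subseteq U(x).
\]
For the forward direction I would first strengthen the hypothesis to $L(U(x,y'),y)\subseteq L(y,z)$ (the $L(y)$ factor is already present on the left), apply $U$ to reverse the inclusion, and intersect with $U(y')$ to obtain
\[
U(L(U(x,y'),y),y')\supseteq U(L(y,z),y');
\]
the third hypothesized LU-identity collapses the left-hand side to $U(x,y')$, whence $U(L(y,z),y')\subseteq U(x,y')\subseteq U(x)$. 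For the backward direction I combine $U(L(y,z),y')\subseteq U(x)$ with the trivial $U(L(y,z),y')\subseteq U(y')$ to get $U(L(y,z),y')\subseteq U(x,y')$; applying $L$ and intersecting with $L(y)$ produces
\[
L(U(x,y'),y)\subseteq L(U(L(y,z),y'),y),
\]
and the second hypothesized identity, applied with outer variable renamed to $z$, rewrites the right-hand side as $L(z,y)\subseteq L(z)$, closing the argument.

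The only real obstacle is the bookkeeping: keeping track of which cone to take, intersecting with the right factor ($U(y')$ on the forward pass, $L(y)$ on the backward one), and matching the substitution into each LU-identity. The forward direction is tailored exactly to the $U$-shaped identity and the backward direction exactly to the $L$-shaped one, so once these pairings are spotted the reasoning is essentially symmetric and formal. One point worth flagging in the write-up is the reduction of the $\Max$/$\Min$ inequalities to plain set inclusions, since this is the single place where the finiteness of $P$ is genuinely used.
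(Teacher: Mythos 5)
Your proposal is correct and follows essentially the same route as the paper: finiteness reduces the $\Max$/$\Min$ inequalities to the cone inclusions $L(U(x,y'),y)\subseteq L(z)$ and $U(L(y,z),y')\subseteq U(x)$, the forward direction of adjointness is settled by the $U$-shaped identity and the backward direction by the $L$-shaped one (each after intersecting with the appropriate cone), and the remaining identities including idempotence are direct computations. No gaps.
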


\begin{proof}
Let $a,b,c\in P$. Then the following are equivalent:
\begin{align*}
         a\odot b & \leq c, \\
\Max L(U(a,b'),b) & \leq c, \\
     L(U(a,b'),b) & \leq c, \\
     L(U(a,b'),b) & \subseteq L(c).
\end{align*}
Moreover, the following are equivalent:
\begin{align*}
           a & \leq b\rightarrow c, \\
           a & \leq\Min U(L(b,c),b'), \\
           a & \leq U(L(b,c),b'), \\
U(L(b,c),b') & \subseteq U(a).
\end{align*}
If $a\odot b\leq c$ then $L(U(a,b'),b)\subseteq L(c)$ and hence
\begin{align*}
U(L(b,c),b') & =U(L(b)\cap L(c),b')\subseteq U(L(b)\cap L(U(a,b'),b),b')=U(L(U(a,b'),b),b')= \\
             & =U(a,b')\subseteq U(a),
\end{align*}
i.e., $a\leq b\rightarrow c$. If, conversely, $a\leq b\rightarrow c$ then $U(L(b,c),b')\subseteq U(a)$ and hence
\begin{align*}
L(U(a,b'),b) & =L(U(a)\cap U(b'),b)\subseteq L(U(L(b,c),b')\cap U(b'),b)=L(U(L(b,c),b'),b)= \\
             & =L(b,c)\subseteq L(c),
\end{align*}
i.e., $a\odot b\leq c$. Finally,
\begin{align*}
        x\odot0 & \approx\Max L(U(x,0'),0)\approx0, \\
       x\odot x & \approx\Max L(U(x,x'),x)\approx x, \\
        x\odot1 & \approx\Max L(U(x,1'),1)\approx x, \\
       1\odot x & \approx\Max L(U(1,x'),x)\approx x, \\
0 \rightarrow x & \approx\Min U(L(0,x),0')\approx0', \\
  x\rightarrow0 & \approx\Min U(L(x,0),x')\approx x', \\
x\rightarrow x' & \approx\Min U(L(x,x'),x')\approx x', \\
 1\rightarrow x & \approx\Min U(L(1,x),1')\approx x.
\end{align*}
\end{proof}

\begin{corollary}\label{cor1}
Let $\mathbf P=(P,\leq,{}',0,1)$ be a finite bounded complemented strongly modular poset. Then $\mathbf P$ satisfies the assumptions of Theorem~\ref{th1} and hence $(P,\leq,\odot,\rightarrow,1)$, where
\begin{align*}
      x\odot y & :=\Max L(U(x,y'),y), \\
x\rightarrow y & :=\Min U(L(x,y),x')
\end{align*}
for all $x,y\in B$, is an idempotent left-residuated poset satisfying the identities
\[
x\odot0\approx0,\;0\rightarrow x\approx0',\;x\rightarrow0\approx x',\;x\rightarrow x'\approx x',\;1\rightarrow x\approx x.
\]
\end{corollary}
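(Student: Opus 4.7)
The plan is to verify the three LU-identities required by Theorem~\ref{th1}, after which the corollary follows by direct invocation of that theorem. The available tools are the two strong modularity identities, the complementation laws $L(x,x')\approx 0$ and $U(x,x')\approx 1$, and the standard cone identities $LUL(A)=L(A)$ and $ULU(A)=U(A)$.

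The identity $1'\approx 0$ is immediate: since $1'\in L(1,1')=\{0\}$, we conclude $1'=0$. For the second identity $L(U(L(x,y),y'),y)\approx L(x,y)$, I would apply the second strong modularity identity $L(U(L(x,z),y),z)\approx LU(L(x,z),L(y,z))$ with $z:=y$ and the internal $y$ replaced by $y'$; the right-hand side then becomes $LU(L(x,y),L(y',y))$, which by complementation equals $LU(L(x,y),\{0\})$. Since $U(0)=P$, this collapses to $LU(L(x,y))=LUL(x,y)=L(x,y)$.

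The third identity $U(L(U(x,y'),y),y')\approx U(x,y')$ is the subtle point, because strong modularity is supplied only in $L$-outer form and not in its $U$-outer dual. My idea is to apply the \emph{first} strong modularity identity $L(U(x,y),U(x,z))\approx LU(x,L(y,U(x,z)))$ with the substitution $x:=y'$, $z:=x$, leaving $y$ fixed. This yields
\[
L(U(y',y),U(y',x))\approx LU(y',L(y,U(y',x))).
\]
By complementation $U(y',y)=\{1\}$, and since $L(1)=P$ the left-hand side simplifies to $LU(x,y')$. Applying the operator $U$ to both sides and invoking $ULU=U$ produces
\[
U(x,y')\approx U(y',L(y,U(x,y')))=U(L(U(x,y'),y),y'),
\]
which is exactly the identity we need.

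The only nonroutine step is recognizing that the $U$-outer identity demanded by Theorem~\ref{th1} can be extracted from the $L$-outer first strong modularity identity by a judicious substitution and a single application of $ULU=U$; the remaining manipulations are routine cone arithmetic. Once the three hypotheses of Theorem~\ref{th1} are verified, the conclusion of the corollary, including the idempotency and the five listed identities for $\odot$ and $\rightarrow$, follows directly from that theorem.
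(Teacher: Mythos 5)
Your proposal is correct and follows essentially the same route as the paper: $1'=0$ via $L(1,1')\approx 0$, the second identity via the second strong modularity identity with $z:=y$ and $y:=y'$ plus $L(y,y')\approx 0$, and the third identity via the first strong modularity identity with $x:=y'$, $z:=x$ together with $U(y,y')\approx 1$ and $ULU=U$ (the paper just writes this last computation as a forward chain of equalities rather than deriving it by applying $U$ to both sides).
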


\begin{proof}
We have
\begin{align*}
1' & \approx L(1,1')\approx0, \\
L(U(L(x,y),y'),y) & \approx LU(L(x,y),L(y',y))\approx LU(L(x,y),0)\approx LUL(x,y)\approx L(x,y), \\
U(L(U(x,y'),y),y') & \approx ULU(y',L(y,U(y',x)))\approx UL(U(y',y),U(y',x))\approx \\
& \approx UL(1,U(x,y'))\approx ULU(x,y')\approx U(x,y').
\end{align*}
\end{proof}

Corollary~\ref{cor1} can be considered as some ``finite version'' of a theorem in \cite{CL19a}.

An example of a complemented strongly modular poset where the complementation is not an involution is shown in the following.

\begin{example}
The poset shown in Fig.~2
\[
\setlength{\unitlength}{7mm}
\begin{picture}(6,8)
\put(3,0){\circle*{.3}}
\put(0,2){\circle*{.3}}
\put(1,2){\circle*{.3}}
\put(2,2){\circle*{.3}}
\put(4,2){\circle*{.3}}
\put(6,2){\circle*{.3}}
\put(0,4){\circle*{.3}}
\put(6,4){\circle*{.3}}
\put(0,6){\circle*{.3}}
\put(2,6){\circle*{.3}}
\put(4,6){\circle*{.3}}
\put(5,6){\circle*{.3}}
\put(6,6){\circle*{.3}}
\put(3,8){\circle*{.3}}
\put(3,0){\line(-3,2)3}
\put(3,0){\line(-1,1)2}
\put(3,0){\line(-1,2)1}
\put(3,0){\line(1,2)1}
\put(3,0){\line(3,2)3}
\put(3,8){\line(-3,-2)3}
\put(3,8){\line(-1,-2)1}
\put(3,8){\line(1,-2)1}
\put(3,8){\line(1,-1)2}
\put(3,8){\line(3,-2)3}
\put(0,2){\line(0,1)4}
\put(1,2){\line(-1,2)1}
\put(1,2){\line(1,1)4}
\put(6,2){\line(0,1)4}
\put(0,4){\line(1,1)2}
\put(0,2){\line(1,1)4}
\put(2,2){\line(1,1)4}
\put(4,2){\line(1,1)2}
\put(2,2){\line(-1,1)2}
\put(4,2){\line(-1,1)4}
\put(6,2){\line(-1,1)4}
\put(6,4){\line(-1,1)2}
\put(5,6){\line(1,-2)1}
\put(2.85,-.75){$0$}
\put(-.6,1.9){$a$}
\put(.4,1.9){$f$}
\put(2.4,1.9){$b$}
\put(4.45,1.9){$c$}
\put(6.4,1.9){$d$}
\put(-.6,3.9){$e$}
\put(6.4,3.9){$e'$}
\put(-.7,5.9){$d'$}
\put(1.2,5.9){$c'$}
\put(3.35,5.9){$b'$}
\put(5.3,5.9){$g$}
\put(6.4,5.9){$a'$}
\put(2.85,8.4){$1$}
\put(2.3,-1.8){{\rm Fig.~2}}
\end{picture}
\]

\vspace*{11mm}

with
\[
\begin{array}{c|cccccccccccccc}
x  & 0 & a  & b  & c  & d  & e  & e' & d' & c' & b' & a' & f  & g \\
\hline
x' & 1 & a' & b' & c' & d' & e' & e  & d  & c  & b  & a  & a' & a
\end{array}
\]
satisfies the assumptions of Corollary~\ref{cor1}, but it is not distributive since
\[
L(U(b,p),a)=L(e,a)=L(a)\neq L(0)=LU(0,0)=LU(L(b,a),L(p,a)).
\]
\end{example}

Concerning the assumptions of Theorem~\ref{th1} we note that the one-sided inclusions hold in general. Namely,
\[
L(x,y)=LUL(x,y)=L(UL(x,y),y)\subseteq L(U(L(x,y),y'),y).
\]
Analogously, we obtain
\[
U(x,y')\subseteq U(L(U(x,y'),y),y').
\]
It is surprising that also in such a general case as described in Theorem~\ref{th1} we obtain relatively simple formulas for $\odot$ and $\rightarrow$. The following example in which the unary operation is even a complementation as well as an antitone involution shows that the assumptions of Theorem~\ref{th1} are not too restrictive.

\newpage

\begin{example}
The poset shown in Fig.~3
\[
\setlength{\unitlength}{7mm}
\begin{picture}(18,8)
\put(9,0){\circle*{.3}}
\put(6,2){\circle*{.3}}
\put(8,2){\circle*{.3}}
\put(10,2){\circle*{.3}}
\put(12,2){\circle*{.3}}
\put(6,4){\circle*{.3}}
\put(12,4){\circle*{.3}}
\put(6,6){\circle*{.3}}
\put(8,6){\circle*{.3}}
\put(10,6){\circle*{.3}}
\put(12,6){\circle*{.3}}
\put(9,8){\circle*{.3}}
\put(1,4){\circle*{.3}}
\put(17,4){\circle*{.3}}
\put(9,0){\line(-3,2)3}
\put(9,0){\line(-1,2)1}
\put(9,0){\line(1,2)1}
\put(9,0){\line(3,2)3}
\put(9,8){\line(-3,-2)3}
\put(9,8){\line(-1,-2)1}
\put(9,8){\line(1,-2)1}
\put(9,8){\line(3,-2)3}
\put(6,2){\line(0,1)4}
\put(12,2){\line(0,1)4}
\put(6,4){\line(1,1)2}
\put(6,2){\line(1,1)4}
\put(8,2){\line(1,1)4}
\put(10,2){\line(1,1)2}
\put(8,2){\line(-1,1)2}
\put(10,2){\line(-1,1)4}
\put(12,2){\line(-1,1)4}
\put(12,4){\line(-1,1)2}
\put(1,4){\line(2,-1)8}
\put(1,4){\line(2,1)8}
\put(17,4){\line(-2,-1)8}
\put(17,4){\line(-2,1)8}
\put(8.85,-.75){$0$}
\put(5.4,1.9){$a$}
\put(7.2,1.9){$b$}
\put(10.45,1.9){$c$}
\put(12.4,1.9){$d$}
\put(5.4,3.9){$e$}
\put(.4,3.9){$f$}
\put(12.4,3.9){$e'$}
\put(17.4,3.9){$f'$}
\put(5.3,5.9){$d'$}
\put(7.2,5.9){$c'$}
\put(10.45,5.9){$b'$}
\put(12.4,5.9){$a'$}
\put(8.85,8.4){$1$}
\put(8.2,-1.8){{\rm Fig.~3}}
\end{picture}
\]

\vspace*{15mm}

satisfies the assumptions of Theorem~\ref{th1}, and it is not modular since $a\leq e$, but
\[
L(U(a,f),e)=L(1,e)=L(e)\neq L(a)=LU(a)=LU(a,0)=LU(a,L(f,e)).
\]
The tables for $\odot$ and $\rightarrow$ look as follows:
\[
\begin{array}{c|cccccccccccccc}
\odot & 0 & a & b & c & d & e & f & f' & e' &    d'   &    c'   &    b'   &    a'   & 1 \\
\hline
  0   & 0 & 0 & 0 & 0 & 0 & 0 & 0 & 0  & 0  &    0    &    0    &    0    &    0    & 0 \\
  a   & 0 & a & 0 & 0 & 0 & a & f & f' & 0  &    a    &    a    &    a    &    0    & a \\
  b   & 0 & 0 & b & 0 & 0 & b & f & f' & 0  &    b    &    b    &    0    &    b    & b \\
  c   & 0 & 0 & 0 & c & 0 & 0 & f & f' & c  &    c    &    0    &    c    &    c    & c \\
  d   & 0 & 0 & 0 & 0 & d & 0 & f & f' & d  &    0    &    d    &    d    &    d    & d \\
  e   & 0 & a & b & 0 & 0 & e & f & f' & 0  &    e    &    e    &    a    &    b    & e \\
  f   & 0 & a & b & c & d & e & f & 0  & e' &    d'   &    c'   &    b'   &    a'   & f \\
  f'  & 0 & a & b & c & d & e & 0 & f' & e' &    d'   &    c'   &    b'   &    a'   & f' \\
  e'  & 0 & 0 & 0 & c & d & 0 & f & f' & e' &    c    &    d    &    e'   &    e'   & e' \\
  d'  & 0 & a & b & c & 0 & e & f & f' & c  &    d'   &    e    & \{a,c\} & \{b,c\} & d' \\
  c'  & 0 & a & b & 0 & d & e & f & f' & d  &    e    &    c'   & \{a,d\} & \{b,d\} & c' \\
  b'  & 0 & a & 0 & c & d & a & f & f' & e' & \{a,c\} & \{a,d\} &    b'   &    e'   & b' \\
  a'  & 0 & 0 & b & c & d & b & f & f' & e' & \{b,c\} & \{b,d\} &    e'   &    a'   & a' \\
  1   & 0 & a & b & c & d & e & f & f' & e' &    d'   &    c'   &    b'   &    a'   & 1
\end{array}
\]
\[
\begin{array}{c|cccccccccccccc}
\rightarrow & 0  &     a     &     b     &     c     &     d     & e  & f  & f' & e' & d' & c' & b' & a' & 1 \\
\hline
     0      & 1  &     1     &     1     &     1     &     1     & 1  & 1  & 1  & 1  & 1  & 1  & 1  & 1   & 1 \\
     a      & a' &     1     &     a'    &     a'    &     a'    & 1  & a' & a' & a' & 1  & 1  & 1  & a' & 1 \\
     b      & b' &     b'    &     1     &     b'    &     b'    & 1  & b' & b' & b' & 1  & 1  & b' & 1  & 1 \\
     c      & c' &     c'    &     c'    &     1     &     c'    & c' & c' & c' & 1  & 1  & c' & 1  & 1  & 1 \\
     d      & d' &     d'    &     d'    &     d'    &     1     & d' & d' & d' & 1  & d' & 1  & 1  & 1  & 1 \\
     e      & e' &     b'    &     a'    &     e'    &     e'    & 1  & e' & e' & e' & 1  & 1  & b' & a' & 1 \\
     f      & f' &     a'    &     b'    &     c'    &     d'    & e' & 1  & f' & e  & d  & c  & b  & a  & 1 \\
     f'     & f  &     a'    &     b'    &     c'    &     d'    & e' & f  & 1  & e  & d  & c  & b  & a  & 1 \\
     e'     & e  &     e     &     e     &     d'    &     c'    & e  & e  & e  & 1  & d' & c' & 1  & 1  & 1 \\
     d'     & d  & \{b',c'\} & \{a',c'\} &     e'    &     d     & c' & d  & d  & e' & 1  & c' & b' & a' & 1 \\
     c'     & c  & \{b',d'\} & \{a',d'\} &     c     &     e'    & d' & c  & c  & e' & d' & 1  & b' & a' & 1 \\
     b'     & b  &     e     &     b     & \{a',d'\} & \{a',c'\} & e  & b  & b  & a' & d' & c' & 1  & a' & 1 \\
     a'     & a  &     a     &     e     & \{b',d'\} & \{b',c'\} & e  & a  & a  & b' & d' & c' & b' & 1  & 1 \\
     1      & 0  &     a     &     b     &     c     &     d     & e  & f  & f' & e' & d' & c' & b' & a' & 1
\end{array}
\]
\end{example}

As a special case of Theorem~\ref{th1} we obtain

\begin{theorem}
Let $(B,\leq,{}',0,1)$ be a finite Boolean poset and define
\begin{align*}
      x\odot y & :=\Max L(x,y), \\
x\rightarrow y & :=\Min U(x',y)
\end{align*}
for all $x,y\in B$. Then $(B,\leq,\odot,\rightarrow,1)$ is an idempotent residuated poset satisfying
\begin{align*}
      x\odot y=0 & \text{ if and only if }x\leq y', \\
x\rightarrow y=1 & \text{ if and only if }x\leq y
\end{align*}
for all $x,y\in B$ and
\[
x\rightarrow0\approx x',\;x\rightarrow x'\approx x',\;1\rightarrow x\approx x.
\]
\end{theorem}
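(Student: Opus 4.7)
The plan is to derive this as a specialization of Corollary~\ref{cor1}. A finite Boolean poset is complemented and, being distributive, is strongly modular (as noted just after the definition of strongly modular). Hence Corollary~\ref{cor1} applies and already yields an idempotent left-residuated structure with $\odot$ and $\rightarrow$ as defined in Theorem~\ref{th1}, together with the identities $x\rightarrow 0\approx x'$, $x\rightarrow x'\approx x'$ and $1\rightarrow x\approx x$. My remaining tasks are to rewrite those formulas in the simpler Boolean form, verify commutativity, and establish the two biconditionals.

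The key step is to collapse $L(U(x,y'),y)$ and $U(L(x,y),x')$ using Boolean distributivity. For $\odot$, the distributive identity $L(U(a,b),c)\approx LU(L(a,c),L(b,c))$ with $(a,b,c)=(x,y',y)$, together with the complementation law $L(y,y')=0$ and the standard equality $LUL=L$, gives
\[
L(U(x,y'),y)=LU(L(x,y),L(y',y))=LU(L(x,y),0)=LUL(x,y)=L(x,y).
\]
Dually, the companion identity $U(L(a,b),c)\approx UL(U(a,c),U(b,c))$ with $(a,b,c)=(x,y,x')$, together with $U(x,x')=1$, $L(1,A)=L(A)$ and $ULU=U$, gives
\[
U(L(x,y),x')=UL(U(x,x'),U(y,x'))=UL(1,U(y,x'))=ULU(y,x')=U(x',y).
\]
Hence $x\odot y=\Max L(x,y)$ and $x\rightarrow y=\Min U(x',y)$. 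Commutativity of $\odot$ is then immediate from $L(x,y)=L(y,x)$, so we actually obtain a two-sided (not merely left) idempotent residuated poset.

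For the biconditionals I would first observe that $\Max A=\{0\}$ is equivalent to $A=\{0\}$ (each element of $A$ lies below some maximal element of $A$, which must be $0$), and dually $\Min A=\{1\}$ iff $A=\{1\}$. Thus $x\odot y=0$ iff $L(x,y)=\{0\}$. If $x\leq y'$ then $L(x,y)\subseteq L(y',y)=\{0\}$; conversely, applying the second distributive identity to $L(x)=L(x,U(y,y'))$ yields $L(x)=LU(L(x,y),L(x,y'))$, which when $L(x,y)=\{0\}$ collapses to $L(x)=LUL(x,y')=L(x,y')$, forcing $x\leq y'$. The implication biconditional $x\rightarrow y=1$ iff $x\leq y$ is the order-dual statement and is proved by the dual computation, using $U(x,x')=1$ in place of $L(x,x')=0$ and the first distributive identity.

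The argument is essentially bookkeeping once Corollary~\ref{cor1} is in hand; the one point that requires a little care is the passage between the element notation ``$x\odot y=0$'' and the set notation ``$L(x,y)=\{0\}$'', together with its dual for $\rightarrow$, but both are handled by the elementary observation about $\Max$ and $\Min$ above. The only genuine computation is the pair of distributive collapses displayed, and these are forced by exactly the two LU-identities defining a Boolean poset.
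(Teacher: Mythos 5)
Your proposal is correct and follows essentially the paper's own route: the two distributive collapses $L(U(x,y'),y)\approx L(x,y)$ and $U(L(x,y),x')\approx U(x',y)$, the commutativity observation, and the two biconditional arguments via $L(x)=LU(L(x,y),L(x,y'))$ and its dual are exactly the computations in the paper's proof. The only (immaterial) difference is that you import left-adjointness from Corollary~\ref{cor1} using ``distributive implies strongly modular,'' whereas the paper verifies the two LU-hypotheses of Theorem~\ref{th1} directly from distributivity and complementation.
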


\begin{proof}
We have
\begin{align*}
 L(U(L(x,y),y'),y) & \approx L(UL(U(x,y'),U(y,y')),y)\approx L(ULU(x,y'),y)\approx L(U(x,y'),y)\approx  \\
                   & \approx LU(L(x,y),L(y',y))\approx LUL(x,y)\approx L(x,y), \\
U(L(U(x,y'),y),y') & \approx U(LU(L(x,y),L(y',y)),y')\approx U(LUL(x,y),y')\approx U(L(x,y),y')\approx \\
                   & \approx UL(U(x,y'),U(y,y'))\approx ULU(x,y')\approx U(x,y'), \\
      L(U(x,y'),y) & \approx LU(L(x,y),L(y',y))\approx LUL(x,y)\approx L(x,y), \\
			U(L(x,y),x') & \approx UL(U(x,x'),U(y,x'))\approx ULU(y,x')\approx U(x',y), \\
          x\odot y & =\Max L(x,y)=0\text{ for all }x,y\in B\text{ with }x\leq y', \\
    x\rightarrow y & =\Min U(x',y)=1\text{ for all }x,y\in B\text{ with }x\leq y, \\			
          x\odot y & \approx\Max L(x,y)\approx\Max L(y,x)\approx y\odot x.
\end{align*}
Moreover, if $x,y\in B$ and $x\odot y=0$ then
\begin{align*}
x & \in L(x)=L(x,1)=L(x,U(y,y'))=LU(L(x,y),L(x,y'))=LU(0,L(x,y'))= \\
  & =LUL(x,y')=L(x,y')\subseteq L(y')
\end{align*}
and hence $x\leq y'$. Finally, if $x,y\in B$ and $x\rightarrow y=1$ then
\begin{align*}
y & \in U(y)=U(0,y)=U(L(x,x'),y)=UL(U(x,y),U(x',y))=UL(U(x,y),1)= \\
  & =ULU(x,y)=U(x,y)\subseteq U(x)
\end{align*}
and hence $x\leq y$.
\end{proof}

Again the formulas for $\odot$ and $\rightarrow$ are very simple.

As an example of a Boolean poset, we may consider that from Example~\ref{ex1}.

\begin{example}
The tables for $\odot$ and $\rightarrow$ for the Boolean poset from Example~\ref{ex1} are as follows:
\[
\begin{array}{c|cccccccccccc}
\odot & 0 & a & b & c & d & e & e' &    d'   &    c'   &    b'   &    a'   & 1 \\
\hline
  0   & 0 & 0 & 0 & 0 & 0 & 0 & 0  &    0    &    0    &    0    &    0    & 0 \\
  a   & 0 & a & 0 & 0 & 0 & a & 0  &    a    &    a    &    a    &    0    & a \\
  b   & 0 & 0 & b & 0 & 0 & b & 0  &    b    &    b    &    0    &    b    & b \\
  c   & 0 & 0 & 0 & c & 0 & 0 & c  &    c    &    0    &    c    &    c    & c \\
  d   & 0 & 0 & 0 & 0 & d & 0 & d  &    0    &    d    &    d    &    d    & d \\
  e   & 0 & a & b & 0 & 0 & e & 0  &    e    &    e    &    a    &    b    & e \\
  e'  & 0 & 0 & 0 & c & d & 0 & e' &    c    &    d    &    e'   &    e'   & e' \\
  d'  & 0 & a & b & c & 0 & e & c  &    d'   &    e    & \{a,c\} & \{b,c\} & d' \\
  c'  & 0 & a & b & 0 & d & e & d  &    e    &    c'   & \{a,d\} & \{b,d\} & c' \\
  b'  & 0 & a & 0 & c & d & a & e' & \{a,c\} & \{a,d\} &    b'   &    e'   & b' \\
  a'  & 0 & 0 & b & c & d & b & e' & \{b,c\} & \{b,d\} &    e'   &    a'   & a' \\
1  & 0 & a & b & c & d & e & e' &    d'   &    c'   &    b'   &    a'   & 1
\end{array}
\]
\[
\begin{array}{c|cccccccccccc}
\rightarrow & 0  & a & b & c & d & e & e' & d' & c' & b' & a' & 1 \\
\hline
     0      & 1  &     1     &     1     &     1     &     1     & 1  & 1  & 1  & 1  & 1  & 1  & 1 \\
     a      & a' &     1     &     a'    &     a'    &     a'    & 1  & a' & 1  & 1  & 1  & a' & 1 \\
     b      & b' &     b'    &     1     &     b'    &     b'    & 1  & b' & 1  & 1  & b' & 1  & 1 \\
     c      & c' &     c'    &     c'    &     1     &     c'    & c' & 1  & 1  & c' & 1  & 1  & 1 \\
     d      & d' &     d'    &     d'    &     d'    &     1     & d' & 1  & d' & 1  & 1  & 1  & 1 \\
     e      & e' &     b'    &     a'    &     e'    &     e'    & 1  & e' & 1  & 1  & b' & a' & 1 \\
     e'     & e  &     e     &     e     &     d'    &     c'    & e  & 1  & d' & c' & 1  & 1  & 1 \\
     d'     & d  & \{b',c'\} & \{a',c'\} &     e'    &     d     & c' & e' & 1  & c' & b' & a' & 1 \\
     c'     & c  & \{b',d'\} & \{a',d'\} &     c     &     e'    & d' & e' & d' & 1  & b' & a' & 1 \\
     b'     & b  &     e     &     b     & \{a',d'\} & \{a',c'\} & e  & a' & d' & c' & 1  & a' & 1 \\
     a'     & a  &     a     &     e     & \{b',d'\} & \{b',c'\} & e  & b' & d' & c' & b' & 1  & 1 \\
     1      & 0  &     a     &     b     &     c     &     d     & e  & e' & d' & c' & b' & a' & 1
\end{array}
\]
As one can see, again the values of $\odot$ and $\rightarrow$ are subsets consisting of at most two elements.
\end{example}

Now, we turn our attention to a bit more complicated case where $\odot$ and $\rightarrow$ cannot be introduced as terms in $L$ and $U$.

\begin{theorem}\label{th2}
Let $(P,\leq,{}',0,1)$ be a finite bounded poset with a unary operation satisfying the {\rm(LU-)}identities
\begin{align*}
                0' & \approx1, \\
							  1' & \approx0, \\
	  						x' & \neq1\text{ for all }x\in P\setminus\{0\}, \\
 L(U(L(x,y),x'),x) & \approx LU(L(x,y),L(x',x)), \\
L(U(x',x),U(y,x')) & \approx LU(x',L(x,U(y,x'))), \\
           L(x,x') & \subseteq L(y)\text{ for all }x\in P\text{ and }y\in P\setminus\{0\}, \\
           U(x,x') & \subseteq U(y)\text{ for all }x\in P\text{ and }y\in P\setminus\{1\}
\end{align*}
and define
\[
x\odot y:=\left\{
\begin{array}{ll}
                0 & \text{if }x\leq y' \\
\Max L(U(x,y'),y) & \text{otherwise}
\end{array}
\right.,\quad x\rightarrow y:=\left\{
\begin{array}{ll}
                1 & \text{if }x\leq y \\
\Min U(L(x,y),x') & \text{otherwise}
\end{array}
\right.
\]
for all $x,y\in P$. Then $(P,\leq,\odot,\rightarrow,1)$ is a left-residuated poset satisfying
\begin{align*}
      x\odot y & =0\text{ for all }x,y\in B\text{ with }x\leq y', \\
x\rightarrow y & =1\text{ for all }x,y\in B\text{ with }x\leq y
\end{align*}
and the identities
\[
x\rightarrow0\approx x',\;1\rightarrow x\approx x.
\] 
\end{theorem}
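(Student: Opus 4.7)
I would split the left-adjointness $a\odot b\leq c\iff a\leq b\rightarrow c$ into four cases according to whether $a\leq b'$ and whether $b\leq c$. In three of these \emph{easy} cases, one side of the biconditional is forced to $0$ or $1$: if $a\leq b'$, then $a\odot b=0\leq c$, and $a\leq b\rightarrow c$ holds either because $b\leq c$ (so $b\rightarrow c=1$) or because, when $b\not\leq c$, every $z\in U(L(b,c),b')$ satisfies $z\geq b'\geq a$, giving $a\leq\Min U(L(b,c),b')$. Symmetrically, if $b\leq c$ then $b\rightarrow c=1\geq a$ and $a\odot b\leq b\leq c$ (either $a\odot b=0$ or $a\odot b=\Max L(U(a,b'),b)\subseteq L(b)$).

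The hard case is $a\not\leq b'$ and $b\not\leq c$. Since $P$ is finite, the observations $\Max S\leq c\iff S\subseteq L(c)$ and $a\leq\Min T\iff T\subseteq U(a)$ reduce the adjointness to the set-theoretic equivalence
\[
L(U(a,b'),b)\subseteq L(c)\iff U(L(b,c),b')\subseteq U(a).
\]
For the backward direction, $U(L(b,c),b')\subseteq U(a)\cap U(b')=U(a,b')$ yields $L(U(a,b'),b)\subseteq L(U(L(b,c),b'),b)$; the first given identity with $x=b$, $y=c$ rewrites the right-hand side as $LU(L(b,c),L(b',b))$. Since $L(b,c)\subseteq L(c)$ trivially and the hypothesis $L(x,x')\subseteq L(y)$ applied with $y=c$ also gives $L(b,b')\subseteq L(c)$ (provided $c\neq 0$), the element $c$ is an upper bound of $L(b,c)\cup L(b',b)$, so $LU(L(b,c),L(b',b))\subseteq L(c)$. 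The subcase $c=0$ is vacuous: $U(L(b,0),b')=U(b')\subseteq U(a)$ would force $a\leq b'$, contradicting the case assumption.

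For the forward direction, $L(U(a,b'),b)\subseteq L(c)\cap L(b)=L(b,c)$ yields $U(L(b,c),b')\subseteq U(L(U(a,b'),b),b')$, and it suffices to prove the Theorem~\ref{th1}-style identity $U(L(U(a,b'),b),b')=U(a,b')$ from the second given identity. Taking $x=b$, $y=a$, its LHS becomes $L(U(b',b)\cup U(a,b'))=LU(b,b')\cap LU(a,b')$; the hypothesis $U(b,b')\subseteq U(a)$ (valid for $a\neq 1$) gives $U(a,b')\supseteq U(b,b')$, hence $LU(a,b')\subseteq LU(b,b')$, and the intersection collapses to $LU(a,b')$. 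Rewriting $L(b,U(a,b'))=L(U(a,b'),b)$, the RHS becomes $L(U(L(U(a,b'),b),b'))$. Applying $U$ to both sides of the resulting identity $LU(a,b')=L(U(L(U(a,b'),b),b'))$ and invoking $ULU=U$ yields $U(a,b')=U(L(U(a,b'),b),b')$, as required. The subcase $a=1$ is vacuous: $L(U(1,b'),b)=L(b)$, so the hypothesis would force $b\leq c$.

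The identities $x\rightarrow 0\approx x'$, $1\rightarrow x\approx x$, and the laws $x\odot 1\approx 1\odot x\approx x$ of condition~{\rm(i)} then follow by direct evaluation using $0'=1$, $1'=0$, and the finite-poset observation $\Min U(x')=\{x'\}$. \textbf{The main obstacle} is the passage from the second given identity to $U(L(U(a,b'),b),b')=U(a,b')$: one must first use $U(x,x')\subseteq U(y)$ to eliminate the spurious factor $LU(b,b')$ from the LHS, and then convert an $L$-level identity into a $U$-level one via the standard closure identity $ULU=U$. The boundary subcases $a=1$ and $c=0$ must also be checked separately, since in each the hypothesis of the equivalence is vacuously false.
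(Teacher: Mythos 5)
Your proposal is correct and follows essentially the same route as the paper: the same four-way case split on $a\leq b'$ and $b\leq c$, the same reduction of the hard case to $L(U(a,b'),b)\subseteq L(c)\iff U(L(b,c),b')\subseteq U(a)$, the same invocation of the two LU-identities together with $U(x,x')\subseteq U(y)$ (for $a\neq1$) and $L(x,x')\subseteq L(y)$ (for $c\neq0$), and the same treatment of the boundary subcases. The only cosmetic difference is that you package the forward direction as the explicit identity $U(L(U(a,b'),b),b')=U(a,b')$ obtained by applying $U$ and $ULU=U$ to the simplified second identity, whereas the paper runs the identical manipulation as a chain of inclusions ending in $UL(U(b',b),U(a,b'))\subseteq ULU(a)=U(a)$.
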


\begin{proof}
We have
\begin{align*}
       0\odot1 & \approx1\odot0\approx0, \\
 0\rightarrow0 & \approx1\rightarrow1\approx1, \\
       x\odot1 & =\Max L(U(x,1'),1)=\Max L(x)=x\text{ for all }x\in P\setminus\{0\}, \\
      1\odot x & =\Max L(U(1,x'),x)=\Max L(x)=x\text{ for all }x\in P\setminus\{0\}, \\
 x\rightarrow0 & =\Min U(L(x,0),x')=\Min U(x')=x'\text{ for all }x\in P\setminus\{0\}, \\
1\rightarrow x & =\Min U(L(1,x),1')=x\text{ for all }x\in P\setminus\{1\}.
\end{align*}
Now let $a,b,c\in P$. If $a\leq b'$ then
\begin{align*}
a\odot b=0 & \leq c, \\
         a & \leq\Min U(L(b,c),b')=b\rightarrow c.
\end{align*}
If $b\leq c$ then
\begin{align*}
a\odot b=\Max L(U(a,b'),b) & \leq c, \\
                         a & \leq1=b\rightarrow c.
\end{align*}
For the rest of the proof assume $a\not\leq b'$ and $b\not\leq c$. Then the following are equivalent:
\begin{align*}
         a\odot b & \leq c, \\
\Max L(U(a,b'),b) & \leq c, \\
     L(U(a,b'),b) & \leq c, \\
     L(U(a,b'),b) & \subseteq L(c).
\end{align*}
Moreover, the following are equivalent:
\begin{align*}
           a & \leq b\rightarrow c, \\
           a & \leq\Min U(L(b,c),b'), \\
           a & \leq U(L(b,c),b'), \\
U(L(b,c),b') & \subseteq U(a).
\end{align*}
First assume $a\odot b\leq c$. Since $a=1$ would imply $b=1\odot b=a\odot b\leq c$, we have $a\neq1$ and hence $U(b,b')\subseteq U(a)$. Now because of $L(U(a,b'),b)\subseteq L(c)$ we obtain
\begin{align*}
U(L(b,c),b') & =U(L(b)\cap L(c),b')\subseteq U(L(b)\cap L(U(a,b'),b),b')=U(L(U(a,b'),b),b')= \\
             & =U(b',L(b,U(a,b')))=ULU(b',L(b,U(a,b')))=UL(U(b',b),U(a,b'))\subseteq \\
						 & \subseteq ULU(a)=U(a),
\end{align*}
i.e.\ $a\leq b\rightarrow c$. Finally, assume $a\leq b\rightarrow c$. Since $c=0$ would imply $a\leq b\rightarrow c=b\rightarrow0=b'$, we have $c\neq0$ and hence $L(b,b')\subseteq L(c)$. Now because of $U(L(b,c),b')\subseteq U(a)$ we obtain
\begin{align*}
L(U(a,b'),b) & =L(U(a)\cap U(b'),b)\subseteq L(U(L(b,c),b')\cap U(b'),b)=L(U(L(b,c),b'),b)= \\
             & =LU(L(b,c),L(b',b))\subseteq LUL(c)=L(c),
\end{align*}
i.e.\ $a\odot b\leq c$.
\end{proof}

Concerning the two {\rm LU}-identities occurring in Theorem~\ref{th2} we note that the one-sided inclusions hold in general. Namely,
\[
L(x,y)\cup L(x',x)\subseteq L(U(L(x,y),x'),x)
\]
and hence
\[
LU(L(x,y),L(x',x))\subseteq L(U(L(x,y),x'),x)
\]
for all $x,y\in P$. Analogously,
\[
LU(x',L(x,U(y,x')))\subseteq L(U(x',x),U(y,x'))
\]
for all $x,y\in P$.

The following example shows a poset which is not a lattice and satisfies the assumptions of Theorem~\ref{th2}.

\begin{example}\label{ex2}
The poset shown in Fig.~4

\vspace*{3mm}

\[
\setlength{\unitlength}{7mm}
\begin{picture}(6,12)
\put(3,2){\circle*{.3}}
\put(0,4){\circle*{.3}}
\put(2,4){\circle*{.3}}
\put(4,4){\circle*{.3}}
\put(6,4){\circle*{.3}}
\put(0,6){\circle*{.3}}
\put(6,6){\circle*{.3}}
\put(0,8){\circle*{.3}}
\put(2,8){\circle*{.3}}
\put(4,8){\circle*{.3}}
\put(6,8){\circle*{.3}}
\put(3,10){\circle*{.3}}
\put(3,0){\circle*{.3}}
\put(3,12){\circle*{.3}}
\put(3,2){\line(-3,2)3}
\put(3,2){\line(-1,2)1}
\put(3,2){\line(1,2)1}
\put(3,2){\line(3,2)3}
\put(3,10){\line(-3,-2)3}
\put(3,10){\line(-1,-2)1}
\put(3,10){\line(1,-2)1}
\put(3,10){\line(3,-2)3}
\put(0,4){\line(0,1)4}
\put(6,4){\line(0,1)4}
\put(0,6){\line(1,1)2}
\put(0,4){\line(1,1)4}
\put(2,4){\line(1,1)4}
\put(4,4){\line(1,1)2}
\put(2,4){\line(-1,1)2}
\put(4,4){\line(-1,1)4}
\put(6,4){\line(-1,1)4}
\put(6,6){\line(-1,1)2}
\put(3,0){\line(0,1)2}
\put(3,12){\line(0,-1)2}
\put(3.4,1.9){$f$}
\put(-.6,3.9){$a$}
\put(1.2,3.9){$b$}
\put(4.45,3.9){$c$}
\put(6.4,3.9){$d$}
\put(-.6,5.9){$e$}
\put(6.4,5.9){$e'$}
\put(-.7,7.9){$d'$}
\put(1.2,7.9){$c'$}
\put(4.45,7.9){$b'$}
\put(6.4,7.9){$a'$}
\put(3.4,9.9){$f'$}
\put(2.85,-.75){$0$}
\put(2.85,12.4){$1$}
\put(2.3,-1.8){{\rm Fig.~4}}
\end{picture}
\]

\vspace*{13mm}

satisfies the assumptions of Theorem~\ref{th2} and the tables for $\odot$ and $\rightarrow$ look as follows:
\[
\begin{array}{c|cccccccccccccc}
\odot & 0 & f & a & b & c & d & e & e' &    d'   &    c'   &    b'   &    a'   & f' & 1 \\
\hline
  0   & 0 & 0 & 0 & 0 & 0 & 0 & 0 & 0  &    0    &    0    &    0    &    0    & 0  & 0 \\
  f   & 0 & 0 & 0 & 0 & 0 & 0 & 0 & 0  &    0    &    0    &    0    &    0    & 0  & f \\
  a   & 0 & 0 & a & 0 & 0 & 0 & a & f  &    a    &    a    &    a    &    f    & a  & a \\
  b   & 0 & 0 & 0 & b & 0 & 0 & b & f  &    b    &    b    &    f    &    b    & b  & b \\
  c   & 0 & 0 & 0 & 0 & c & 0 & f & c  &    c    &    f    &    c    &    c    & c  & c \\
  d   & 0 & 0 & 0 & 0 & 0 & d & 0 & d  &    f    &    d    &    d    &    d    & d  & d \\
  e   & 0 & 0 & a & b & 0 & 0 & e & f  &    e    &    e    &    a    &    b    & e  & e \\
  e'  & 0 & 0 & 0 & 0 & c & d & f & e' &    c    &    d    &    e'   &    e'   & e' & e' \\
  d'  & 0 & 0 & a & b & c & f & e & c  &    d'   &    e    & \{a,c\} & \{b,c\} & d' & d' \\
  c'  & 0 & 0 & a & b & f & d & e & d  &    e    &    c'   & \{a,d\} & \{b,d\} & c' & c' \\
  b'  & 0 & 0 & a & f & c & d & a & e' & \{a,c\} & \{a,d\} &    b'   &    e'   & b' & b' \\
  a'  & 0 & 0 & f & b & c & d & b & e' & \{b,c\} & \{b,d\} &    e'   &    a'   & a' & a' \\
  f'  & 0 & 0 & a & b & c & d & e & e' &    d'   &    c'   &    b'   &    a'   & f' & f' \\
  1   & 0 & f & a & b & c & d & e & e' &    d'   &    c'   &    b'   &    a'   & f' & 1
\end{array}
\]
\[
\begin{array}{c|cccccccccccccc}
\rightarrow & 0  & f  &     a     &     b     &     c     &     d     & e  & e' & d' & c' & b' & a' & f' & 1 \\
\hline
     0      & 1  & 1  &     1     &     1     &     1     &     1     & 1  & 1  & 1  & 1  & 1  & 1  & 1  & 1 \\
     f      & f' & 1  &     1     &     1     &     1     &     1     & 1  & 1  & 1  & 1  & 1  & 1  & 1  & 1 \\
     a      & a' & a' &     1     &     a'    &     a'    &     a'    & 1  & a' & 1  & 1  & 1  & a' & 1  & 1 \\
     b      & b' & b' &     b'    &     1     &     b'    &     b'    & 1  & b' & 1  & 1  & b' & 1  & 1  & 1 \\
     c      & c' & c' &     c'    &     c'    &     1     &     c'    & c' & 1  & 1  & c' & 1  & 1  & 1  & 1 \\
     d      & d' & d' &     d'    &     d'    &     d'    &     1     & d' & 1  & d' & 1  & 1  & 1  & 1  & 1 \\
     e      & e' & e' &     b'    &     a'    &     e'    &     e'    & 1  & e' & 1  & 1  & b' & a' & 1  & 1 \\
     e'     & e  & e  &     e     &     e     &     d'    &     c'    & e  & 1  & d' & c' & 1  & 1  & 1  & 1 \\
     d'     & d  & d  & \{b',c'\} & \{a',c'\} &     e'    &     d     & c' & e' & 1  & c' & b' & a' & 1  & 1 \\
     c'     & c  & c  & \{b',d'\} & \{a',d'\} &     c     &     e'    & d' & e' & d' & 1  & b' & a' & 1  & 1 \\
     b'     & b  & b  &     e     &     b     & \{a',d'\} & \{a',c'\} & e  & a' & d' & c' & 1  & a' & 1  & 1 \\
     a'     & a  & a  &     a     &     e     & \{b',d'\} & \{b',c'\} & e  & b' & d' & c' & b' & 1  & 1  & 1 \\
     f'     & f  & f  &     a     &     b     &     c     &     d     & e  & e' & d' & c' & b' & a' & 1  & 1 \\
     1      & 0  & f  &     a     &     b     &     c     &     d     & e  & e' & d' & c' & b' & a' & f' & 1
\end{array}
\]
\end{example}

In Example~\ref{ex2} the unary operation $'$ is in fact an antitone involution which is not a complementation. We are going to show that $'$ even need not be an involution, see the following example.

\begin{example}
The poset shown in Fig.~5

\vspace*{-4mm}

\[
\setlength{\unitlength}{7mm}
\begin{picture}(6,11)
\put(3,2){\circle*{.3}}
\put(3,4){\circle*{.3}}
\put(1,6){\circle*{.3}}
\put(3,6){\circle*{.3}}
\put(5,6){\circle*{.3}}
\put(3,8){\circle*{.3}}
\put(3,10){\circle*{.3}}
\put(3,2){\line(0,1)8}
\put(3,4){\line(-1,1)2}
\put(3,4){\line(1,1)2}
\put(3,8){\line(-1,-1)2}
\put(3,8){\line(1,-1)2}
\put(2.875,1.25){$0$}
\put(3.4,3.85){$a$}
\put(.35,5.85){$b$}
\put(3.4,5.85){$c$}
\put(5.4,5.85){$d$}
\put(3.4,7.85){$a'$}
\put(2.85,10.4){$1$}
\put(2.2,.3){{\rm Fig.~5}}
\end{picture}
\]

\vspace*{-3mm}

with
\[
\begin{array}{c|ccccccc}
x  & 0 & a  & b & c & d & a' & 1 \\
\hline
x' & 1 & a' & c & d & b & a  & 0
\end{array}
\]
satisfies the assumptions of Theorem~\ref{th2} and the tables for $\odot$ and $\rightarrow$ look as follows:
\[
\begin{array}{c|ccccccc}
\odot & 0 & a & b & c & d & a' & 1 \\
\hline
  0   & 0 & 0 & 0 & 0 & 0 & 0  & 0 \\
  a   & 0 & 0 & 0 & 0 & 0 & 0  & a \\
  b   & 0 & 0 & b & c & 0 & b  & b \\
  c   & 0 & 0 & 0 & c & d & c  & c \\
  d   & 0 & 0 & b & 0 & d & d  & d \\
  a'  & 0 & 0 & b & c & d & a' & a' \\
  1   & 0 & a & b & c & d & a' & 1
\end{array}
\quad\quad\quad
\begin{array}{c|ccccccc}
\rightarrow & 0  & a & b & c & d & a' & 1 \\
\hline
     0      & 1  & 1 & 1 & 1 & 1 & 1  & 1 \\
     a      & a' & 1 & 1 & 1 & 1 & 1  & 1 \\
     b      & c  & c & 1 & c & c & 1  & 1 \\
     c      & d  & d & d & 1 & d & 1  & 1 \\
     d      & b  & b & b & b & 1 & 1  & 1 \\
     a'     & a  & a & b & c & d & 1  & 1 \\
     1      & 0  & a & b & c & d & a' & 1
\end{array}
\]
Since the considered poset is in fact a lattice, the values of $\odot$ and $\rightarrow$ are singletons.
\end{example}

Let us recall the concept of a relatively pseudocomplemented poset. These posets were systematically studied e.g.\ in \cite{CL18b} and \cite{CLP}. Of course, this notion is a generalization of the notion of a relatively pseudocomplemented lattice and in case of lattices both notions coincide.

\begin{definition}
A {\em poset} $(P,\leq)$ is called {\em relatively pseudocomplemented} if for every $a,b\in P$ there exists a greatest element $x$ of $P$ satisfying $L(a,x)\subseteq L(b)$. This element is called the {\em relative pseudocomplement} $a*b$ of $a$ with respect to $b$. Instead of $(P,\leq)$ we also write $(P,\leq,*)$. The binary operation $*$ on $P$ is called {\em relative pseudocomplementation}.
\end{definition}

It is worth noticing that every pseudocomplemented poset $(P,\leq,*)$ has a greatest element, namely $x*x$ ($x\in P$). For finite relatively pseudocomplemented posets the formulas for $\odot$ and $\rightarrow$ turn out to be rather simple.

\begin{theorem}
Let $(P,\leq,*)$ be a finite relatively pseudocomplemented poset and define
\begin{align*}
      x\odot y & :=\Max L(x,y), \\
x\rightarrow y & :=x*y
\end{align*}
for all $x,y\in P$. Then $(P,\leq,\odot,\rightarrow,1)$ is an idempotent residuated poset satisfying
\[
x\rightarrow y=1\text{ if and only if }x\leq y
\]
for all $x,y\in P$ and the identity $1\rightarrow x\approx x$.
\end{theorem}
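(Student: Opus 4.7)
The plan is to verify each clause of the definition of (idempotent) residuated poset together with the auxiliary identities, with adjointness being the central item. The key preliminary observation is that in a \emph{finite} poset, for any non-empty $A\subseteq P$ and any $z\in P$ one has $\Max A\leq z$ if and only if $A\subseteq L(z)$: the forward direction uses the fact that every $w\in A$ lies below some maximal element of $A$, and the converse is immediate. Applying this to $A=L(x,y)$ converts the inequality $x\odot y\leq z$ into the purely set-theoretic condition $L(x,y)\subseteq L(z)$.

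Next I would prove the adjointness condition, which is the main step. The defining property of the relative pseudocomplement gives a tight characterisation:
\[
x\leq y*z\iff L(x,y)\subseteq L(z).
\]
The $(\Rightarrow)$ direction: if $w\leq x$ and $w\leq y$, then $w\leq y*z$, and since also $w\leq y$ we get $w\in L(y,y*z)\subseteq L(z)$ by definition of $y*z$. The $(\Leftarrow)$ direction is immediate: if $L(y,x)\subseteq L(z)$ then $x$ is one of the elements witnessing the property that defines $y*z$, and $y*z$ is the greatest such, so $x\leq y*z$. Combining this with the preliminary observation yields
\[
x\odot y\leq z\iff L(x,y)\subseteq L(z)\iff L(y,x)\subseteq L(z)\iff x\leq y*z=y\rightarrow z,
\]
which is adjointness (and right-adjointness as well, since the condition is symmetric in $x,y$).

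The remaining items are then routine. Commutativity of $\odot$ follows from $L(x,y)=L(y,x)$. For the unit and idempotency, $L(x,1)=L(x)=L(x,x)$ has $x$ as its unique maximal element, giving $x\odot 1\approx 1\odot x\approx x\odot x\approx x$. To handle $\rightarrow$, note that by the remark preceding the theorem $P$ has a greatest element $1=x*x$, and $1*x$ is the greatest $t$ with $L(t)=L(1,t)\subseteq L(x)$, i.e.\ the greatest $t\leq x$, namely $x$ itself; so $1\rightarrow x\approx x$. Finally, $x\rightarrow y=1$ means $x*y=1$, which by the characterisation above is equivalent to $L(x,1)=L(x)\subseteq L(y)$, i.e.\ $x\leq y$.

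The only genuine subtlety is the equivalence $x\leq y*z\iff L(x,y)\subseteq L(z)$; once that is in hand, everything else is unpacking definitions, and the finiteness assumption enters solely through the reduction from $L(x,y)\subseteq L(z)$ to $\Max L(x,y)\leq z$, which makes the definition of $\odot$ meaningful as a finite set of maximal witnesses.
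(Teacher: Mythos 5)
Your proposal is correct and follows essentially the same route as the paper: both reduce $x\odot y\leq z$ via finiteness to $L(x,y)\subseteq L(z)$ and then identify this with $x\leq y*z$ by the defining property of the relative pseudocomplement, with the remaining clauses handled by direct computation. You merely spell out in more detail the two equivalences that the paper states without justification.
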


\begin{proof}
For $a,b,c\in P$ the following are equivalent:
\begin{align*}
   a\odot b & \leq c, \\
\Max L(a,b) & \leq c, \\
     L(a,b) & \leq c, \\
     L(a,b) & \subseteq L(c), \\
     L(b,a) & \subseteq L(c), \\
          a & \leq b*c, \\
          a & \leq b\rightarrow c.
\end{align*}
Moreover, the following are equivalent:
\begin{align*}
a\rightarrow b & =1, \\
           a*b & =1, \\
        L(a,1) & \subseteq L(b), \\
          L(a) & \subseteq L(b), \\
             a & \leq b.
\end{align*}
Finally,
\begin{align*}
      x\odot y & \approx\Max L(x,y)\approx\Max L(y,x)\approx y\odot x, \\
      x\odot x & \approx\Max L(x,x)\approx x, \\
       x\odot1 & \approx\Max L(x,1)\approx x, \\
1\rightarrow x & \approx1*x\approx x.
\end{align*}
\end{proof}

\begin{example}
The poset shown in Fig.~6

\vspace*{-4mm}

\[
\setlength{\unitlength}{7mm}
\begin{picture}(6,9)
\put(3,2){\circle*{.3}}
\put(1,4){\circle*{.3}}
\put(5,4){\circle*{.3}}
\put(1,6){\circle*{.3}}
\put(5,6){\circle*{.3}}
\put(3,8){\circle*{.3}}
\put(3,2){\line(-1,1)2}
\put(3,2){\line(1,1)2}
\put(1,4){\line(0,1)2}
\put(1,4){\line(2,1)4}
\put(5,4){\line(-2,1)4}
\put(5,4){\line(0,1)2}
\put(3,8){\line(-1,-1)2}
\put(3,8){\line(1,-1)2}
\put(2.875,1.25){$0$}
\put(.35,3.85){$a$}
\put(5.4,3.85){$b$}
\put(.35,5.85){$c$}
\put(5.4,5.85){$d$}
\put(2.85,8.4){$1$}
\put(2.2,.3){{\rm Fig.~6}}
\end{picture}
\]

\vspace*{-3mm}

is relatively pseudocomplemented and the tables for $\odot$ and $\rightarrow$ look as follows:
\[
\begin{array}{c|cccccc}
\odot & 0 & a & b &    c    &    d    & 1 \\
\hline
  0   & 0 & 0 & 0 &    0    &    0    & 0 \\
  a   & 0 & a & 0 &    a    &    a    & a \\
  b   & 0 & 0 & b &    b    &    b    & b \\
  c   & 0 & a & b &    c    & \{a,b\} & c \\
  d   & 0 & a & b & \{a,b\} &    d    & d \\
  1   & 0 & a & b &    c    &    d    & 1
\end{array}
\quad\quad\quad
\begin{array}{c|cccccc}
\rightarrow & 0 & a & b & c & d & 1 \\
\hline
     0      & 1 & 1 & 1 & 1 & 1 & 1 \\
     a      & b & 1 & b & 1 & 1 & 1 \\
     b      & a & a & 1 & 1 & 1 & 1 \\
     c      & 0 & a & b & 1 & d & 1 \\
     d      & 0 & a & b & c & 1 & 1 \\
     1      & 0 & a & b & c & d & 1
\end{array}
\]
As one can see, here the values of $\odot$ contain at most two elements and the values of $\rightarrow$ are singletons.
\end{example}

Authors' addresses:

Ivan Chajda \\
Palack\'y University Olomouc \\
Faculty of Science \\
Department of Algebra and Geometry \\
17.\ listopadu 12 \\
771 46 Olomouc \\
Czech Republic \\
ivan.chajda@upol.cz

Helmut L\"anger \\
TU Wien \\
Faculty of Mathematics and Geoinformation \\
Institute of Discrete Mathematics and Geometry \\
Wiedner Hauptstra\ss e 8-10 \\
1040 Vienna \\
Austria, and \\
Palack\'y University Olomouc \\
Faculty of Science \\
Department of Algebra and Geometry \\
17.\ listopadu 12 \\
771 46 Olomouc \\
Czech Republic \\
helmut.laenger@tuwien.ac.at
\end{document}